\newtheorem{theorem}{Theorem}
\newtheorem{lemma}[theorem]{Lemma}
\theoremstyle{definition}
\newtheorem*{remark}{Remark}
\author{Paul Myer Kominers and Scott Duke Kominers}
\title{Candy-passing Games on General Graphs, II}
\thanks{The second author gratefully acknowledges the support of a Harvard Mathematics Department Highbridge Fellowship.}
\address{\newline\indent Student, Department of Mathematics, Massachusetts Institute of Technology}
\email{pkoms@mit.edu}
\address{Student, Department of Mathematics, Harvard University\newline\indent c/o 8520
  Burning Tree Road\newline \indent Bethesda, MD 20817}
\email{kominers@fas.harvard.edu}
\subjclass[2000]{05C35, 05C85, 68Q25 (Primary); 37B15, 68R10, 68Q80 (Secondary)}
\keywords{candy-passing, chip-firing, graph game, stabilization, polynomial time}
\newcommand\pass[2]{\varphi_{#1}(#2)}
\begin{document}
\begin{abstract}
We give a new proof that any candy-passing game on a graph $G$ with at least $4|E(G)|-|V(G)|$ candies stabilizes.
Unlike the prior literature on candy-passing games, we use
methods from the general theory of chip-firing games which allow us to
obtain a polynomial bound on the number of rounds before stabilization. 
\end{abstract}
\maketitle
\section{Introduction}
We let $G$ be an undirected graph and respectively denote the vertex and edge sets of $G$ by $V(G)$ and
$E(G)$.  The \emph{candy-passing game on $G$} is defined by the following rules:
\begin{itemize}
\item At the beginning of the game, $c>0$ candies are distributed
  among $|V(G)|$ students, each of whom is seated at some distinct vertex
  $v\in V(G)$.
\item A whistle is sounded at a regular interval.
\item Each time the whistle is sounded, every student who is able to do
  so passes one candy to each of his neighbors.  (If at the beginning of
  this step a student holds fewer candies than he has neighbors, he
  does nothing.)
\end{itemize}
Tanton \cite{Candy Passing} introduced this game for cyclic $G$. The
authors \cite{PS} extended the game to general graphs $G$.

The candy-passing game on $G$ is a special case of the well-known \emph{chip-firing
  game} on $G$ introduced by Bj\"orner, Lov\'asz, and Shor
  \cite{BLS}. Furthermore, terminating candy-passing games on $G$ are actually
  equivalent to terminating chip-firing games on $G$, by the following key theorem:
\begin{theorem}[\cite{BLS}]\label{t1}The initial configuration of a chip-firing
  game on $G$  determines whether the game will terminate.  If the game does
  terminate, then both the final configuration and length of the game
  are dependent only on the initial configuration.
\end{theorem}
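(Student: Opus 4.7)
The plan is to view each legal play of the chip-firing game as a (finite or infinite) sequence $\sigma = (v_1, v_2, \ldots)$ of vertex firings and to prove the stronger claim that the \emph{firing multiset} (the ``odometer'' assigning to each $v$ the number of times $v$ has been fired in $\sigma$) is the same for every terminating legal play from a given initial configuration~$C$. Since each firing at $v$ adds a fixed vector to the configuration (namely $-\deg(v)$ at $v$ and $+1$ at each neighbor of $v$), equality of firing multisets will immediately yield both equality of final configurations and equality of total lengths; and the same monotonicity will force every legal play to terminate whenever one does.

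First I would establish two elementary local principles. The \emph{diamond lemma}: if $u\neq v$ can both legally fire from a configuration $C$, then each remains legal after the other is fired, and firing them in either order leads to the same configuration --- the point being that firing one vertex only \emph{increases} the chip count at every other vertex. The \emph{forcing lemma}: if $v$ can legally fire at $C$ and $\sigma$ is a terminating legal sequence from $C$ in which $v$ is never fired, then $v$'s chip count along $\sigma$ only grows, so $v$ still holds at least $\deg(v)$ chips in the terminal configuration, contradicting termination.

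The core of the argument is a monotonicity principle, proved by induction on the prefix length of an arbitrary legal sequence $\tau=(u_1,u_2,\ldots)$ from $C$: for every prefix $\tau_k$ and every vertex $w$, the number of firings of $w$ in $\tau_k$ is at most the number in some fixed terminating sequence $\sigma$ from $C$. The delicate inductive step occurs precisely when $u_{k+1}=w$ has already been fired in $\tau_k$ as many times as in $\sigma$. There I would iteratively apply the diamond lemma to rearrange $\sigma$ into a legal sequence from $C$ whose first $k$ firings exactly realize the multiset of $\tau_k$; this produces a terminating continuation from the configuration $C_k$ reached by $\tau_k$ in which $w$ is never fired, contradicting the forcing lemma applied to $w$ at $C_k$.

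The main obstacle is precisely this rearrangement step: one must show, by a careful secondary induction using only the diamond lemma, that any legal prefix whose firing multiset is dominated coordinate-wise by $\tau_k$ extends to a legal reordering of $\sigma$ beginning with a permutation of $\tau_k$. Once the monotonicity principle is in hand, Theorem~\ref{t1} follows immediately: two terminating sequences from $C$ dominate each other as firing multisets and hence agree, pinning down both the final configuration and the total length; and any infinite legal sequence from $C$ would be bounded coordinate-wise by the finite firing multiset of a terminating $\sigma$, which is impossible, so termination itself depends only on the initial configuration.
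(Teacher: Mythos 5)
Your proposal is correct, but note that the paper itself gives no proof of this statement: it is imported verbatim from Bj\"orner--Lov\'asz--Shor \cite{BLS}, so there is nothing internal to compare against. What you have written is a sound, self-contained proof of the abelian (confluence) property along essentially the lines of \cite{BLS}: local commutation of firings (your diamond lemma), the observation that a fireable vertex cannot remain unfired in any terminating sequence (your forcing lemma), and a Newman-style induction showing that every legal sequence is dominated, as a firing multiset, by any fixed terminating one. The rearrangement step you flag as the main obstacle does go through --- given that the multiset of the legal prefix $\tau_k$ is dominated by that of $\sigma$, one repeatedly pulls the next firing of $\tau_k$ to the front of $\sigma$ by commuting it past earlier, distinct firings, which stay legal because firing any other vertex never decreases a chip count --- but it is worth knowing that it can be bypassed entirely. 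At the first step where some vertex $w$ is about to fire in $\tau$ for the $(\#_w(\sigma)+1)$-st time, one simply compares chip counts at $w$: the configuration value at $w$ is an affine function of the firing multiset that is decreasing in $\#_w$ and increasing in $\#_u$ for neighbors $u$, so the terminal configuration of $\sigma$ assigns $w$ at least as many chips as $w$ holds just before this illegal extra firing, namely at least $\deg(w)$, contradicting stability of $\sigma$'s terminal configuration. This counting shortcut (essentially the argument as it appears in Tardos \cite{T} and in many later treatments) eliminates the secondary induction and the diamond lemma altogether; your route is longer but equally valid, and the final deductions --- mutual domination of two terminating multisets, and finiteness of any legal play dominated by a finite multiset --- are exactly right.
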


Terminating chip-firing games have been studied extensively and are
surprisingly well-behaved.  In addition to Theorem \ref{t1}, it is known
that terminating chip-firing procesesses finish in polynomial time (see
\cite{T}).  Chip-firing games also have important applications; notably,
they are related to Tutte polynomials (see \cite{L}) and the critical
groups of graphs (see \cite{B}).

Infinite chip-firing games have received less attention, as the notion
of an ``end state'' of such a game is ambiguous.  By contrast, an infinite
candy-passing game admits a clear stabilization condition: the game
is said to have \emph{stabilized} if the configuration of candy will
never again change.

The first author \cite{PKoms} studied the end
  behavior of candy-passing games on $n$-cycles, proving the eventual
  stabilization of any candy-passing game on an $n$-cycle with at least
  $3n-2$ candies.  The authors \cite{PS} extended this analysis to arbitrary connected
  graphs $G$, showing that any candy-passing game on such $G$ with at
  least $4|E(G)|-|V(G)|$ candies will stabilize.

Here, we give a new proof of the stabilization result for general
connected graphs, using methods which allow us to obtain a polynomial
bound on the stabilization time.  Our approach draws from the literature
on chip-firing, using in particular a key result from Tardos's \cite{T} proof that
terminating chip-firing games conclude in polynomial time.

\section{The Setting}
As in the earlier work on candy-passing games, we refer to the interval
between soundings of the whistle as a \emph{round} of
candy-passing.   We
denote by $\pass{t}{v}$ the total number times a vertex $v\in V(G)$ has passed
candy by the end of round $t$.

Since infinite candy-passing games differ from infinite
chip-firing games, we will continue to distinguish between ``candies''
and ``chips.''  However,  we drop the student metaphor, treating the candy
piles as belonging to the vertices of the graph $G$.  For consistency, we
denote the total number of candies in a candy-passing game by $c$ throughout.  

Abusing terminology slightly, we say that a vertex has \emph{stabilized}
in some round if, after that round, the amount of candy held by that
vertex will not change during the remainder of the game.

For a vertex $v\in V(G)$, we denote the degree of $v$ by $\deg(v)$. We
say that a vertex $v\in V(G)$ is \emph{abundant} if it holds at least
$2\deg(v)$ pieces of candy.

Any vertex $v\in V(G)$ with $k\geq \deg(v)$ candies at the
beginning of a round passes $\deg(v)$ pieces of candy to its
neighbors and can, at most, receive one piece of candy from each of its
$\deg(v)$ neighbors.  Thus, such a vertex cannot end the round with more
than $k$ candies.  In particular, then, the set of abundant vertices
of $G$ can only shrink over the course of  a candy-passing game on $G$.

\section{Main Theorem}
We will prove the following stabilization theorem:
\begin{theorem}\label{thmP}
Let $G$ be a connected graph with diameter $d$. In any candy-passing game on $G$ with
$$c\geq4|E(G)|-|V(G)|$$ candies, every vertex $v\in V(G)$ will stabilize
within $|V(G)|\cdot d\cdot c$ rounds.
\end{theorem}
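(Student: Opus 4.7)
The plan is to recast the candy-passing game as the parallel chip-firing game on $G$ and to bound its stabilization time using a Tardos-style firing-count discrepancy estimate from \cite{T}.

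I would first establish a pigeonhole lemma: in every round before stabilization, at least one abundant vertex exists. If no vertex were abundant, every $v\in V(G)$ would hold at most $2\deg(v)-1$ candies, forcing
\[
c\;\leq\;\sum_{v\in V(G)}(2\deg(v)-1)\;=\;4|E(G)|-|V(G)|\;=\;c;
\]
equality then forces each vertex to hold exactly $2\deg(v)-1\geq\deg(v)$ candies, so every vertex would fire, receive exactly $\deg(v)$ candies back from its firing neighbors, and retain its pile --- contradicting the assumption that the game has not stabilized.

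Next I would import control on $\pass{t}{v}$ in the spirit of \cite{T}. Writing $c_u(0)$ for the initial candy pile at $u$, non-negativity of candy counts gives, for every $u\in V(G)$ and every round $t$,
\[
\deg(u)\cdot\pass{t}{u}\;\leq\;c_u(0)+\sum_{w\sim u}\pass{t}{w};
\]
a complementary argument --- a vertex that remains inactive while a neighbor fires repeatedly must eventually accumulate enough candies to be forced to fire --- produces the matching lower bound. Together these show that $|\pass{t}{u}-\pass{t}{v}|$ is bounded by a polynomial in $c$ whenever $u,v$ are adjacent, and iterating along a shortest $u$-$v$ path upgrades this to $|\pass{t}{u}-\pass{t}{v}|=O(d\cdot c)$ for all pairs $u,v\in V(G)$.

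Combining the two: since some vertex is abundant (and hence fires) in every non-stable round, the firing-count discrepancy bound forces every vertex's firing count to track the round count $t$ up to an additive $O(d\cdot c)$, so each vertex is active in all but $O(d\cdot c)$ rounds. Aggregating these ``missed rounds'' across the $|V(G)|$ vertices --- together with the possibility that the abundant vertex shifts as candies redistribute --- yields the stated bound $|V(G)|\cdot d\cdot c$. The main obstacle will be establishing the discrepancy bound of the second step in the parallel setting with the correct dependence on $c$, since \cite{T} treats sequential chip-firing and the parallel analog requires separately ruling out periodic candy-passing orbits of period greater than one.
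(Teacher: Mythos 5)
Your outline follows the paper's strategy quite closely --- an abundance/pigeonhole argument combined with Tardos's firing-count discrepancy bound iterated along shortest paths, then an aggregation of missed rounds --- but there is a genuine gap at exactly the point you flag as ``the possibility that the abundant vertex shifts.'' Your pigeonhole lemma shows only that in each non-stable round \emph{some} vertex is abundant, possibly a different one each time. From that, the most you can extract is $\sum_{v}\pass{t}{v}\geq t$, hence $\max_{v}\pass{t}{v}\geq t/|V(G)|$, and the discrepancy bound then gives $\pass{t}{v}\geq t/|V(G)|-d\cdot c$ for every $v$. This allows each vertex to miss a constant fraction of all rounds forever, so it yields no stabilization statement at all. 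The assertion that ``every vertex's firing count tracks the round count $t$ up to an additive $O(d\cdot c)$'' requires a \emph{single} vertex $v_*$ with $\pass{t}{v_*}=t$ for all $t$, i.e.\ one vertex that fires in every round; your argument never produces one, and acknowledging the obstacle does not discharge it.

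The missing idea is a monotonicity observation. A vertex holding $k\geq\deg(v)$ candies at the start of a round passes $\deg(v)$ candies and receives at most $\deg(v)$, so it ends the round with at most $k$; consequently a non-abundant vertex can never become abundant, and the set of abundant vertices only shrinks as the game proceeds. Combining this with your pigeonhole (the abundant set is nonempty in every round in which some vertex holds fewer than $2\deg(v)-1$ candies; in all other rounds every vertex fires anyway), the abundant sets form a nested, finite, nonempty family, whose intersection supplies the required permanent firer $v_*$. This is precisely the content of the paper's Lemma \ref{l2}. With $v_*$ in hand, $t-\pass{t}{v}=\pass{t}{v_*}-\pass{t}{v}\leq d\cdot c$ for every $v$, and your aggregation over the $|V(G)|$ vertices goes through as in the paper. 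Two lesser points: the paper sidesteps your sequential-versus-parallel worry by citing the adjacent-vertex bound $|\pass{t}{v}-\pass{t}{v'}|\leq c$ directly as a special case of Tardos's Lemma 5 (its Lemma \ref{l1}), whereas your sketched ``complementary'' lower bound on $\pass{t}{u}$ is not yet an argument; and your equality analysis in the pigeonhole step is only needed in the boundary case $c=4|E(G)|-|V(G)|$, which is worth saying explicitly.
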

The stabilization component of Theorem \ref{thmP} was obtained in
\cite[Theorem 2]{PS}.  Our methods are inspired by those of Tardos
\cite{T}; they are essentially independent of the arguments used in
\cite{PKoms} and \cite{PS}.

We use the following lemma, which is a special case of Tardos's \cite{T} Lemma 5:
\begin{lemma}\label{l1}Let $v,v'\in V(G)$ be adjacent vertices of $G$.  Then,
  $|\pass{t}{v}-\pass{t}{v'}|\leq c$ for all $t$.\label{totals}
\end{lemma}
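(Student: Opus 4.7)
The plan is to prove the lemma by a cut-based flow argument, specializing Tardos's approach. The central tool will be a discrete divergence identity: for any subset $S \subseteq V(G)$, the net decrease in candy held by vertices of $S$ between rounds $0$ and $t$ equals a signed sum of pass-count differences across the boundary of $S$.

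To derive the identity, write $r_\tau(u)$ for the candy at $u$ after round $\tau$. Per-vertex candy conservation gives
\[
r_0(u) - r_t(u) \;=\; \deg(u)\,\pass{t}{u} - \sum_{w \sim u}\pass{t}{w} \;=\; \sum_{w \sim u}\bigl(\pass{t}{u} - \pass{t}{w}\bigr).
\]
Summing over $u \in S$ and cancelling the contributions from edges whose endpoints both lie in $S$ yields
\[
\sum_{u \in S}\bigl(r_0(u) - r_t(u)\bigr) \;=\; \sum_{\substack{uw \in E(G)\\ u \in S,\, w \notin S}}\bigl(\pass{t}{u} - \pass{t}{w}\bigr).
\]
The left-hand side is trivially at most $\sum_{u \in S} r_0(u) \leq c$, so the right-hand side is bounded by $c$ as well.

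To finish, I would pick $S$ so that $vv'$ is a boundary edge and every boundary term is non-negative. Assuming without loss of generality that $\pass{t}{v} \geq \pass{t}{v'}$ (the equality case is trivial), set $S = \{u \in V(G) : \pass{t}{u} \geq \pass{t}{v'} + 1\}$. Then $v \in S$ and $v' \notin S$, so $vv'$ appears in the sum contributing exactly $\pass{t}{v} - \pass{t}{v'}$; and for any other boundary edge $uw$ with $u \in S$, $w \notin S$, the definition of $S$ forces $\pass{t}{u} \geq \pass{t}{v'} + 1 > \pass{t}{w}$, so that term is a positive integer. Hence $\pass{t}{v} - \pass{t}{v'}$ is dominated by the entire right-hand side, which is at most $c$.

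The main obstacle will be choosing the cut. A naive attempt using only the candy balance at $v'$ fails: $v'$'s other neighbors could have $\pass{t}{\cdot}$-values on either side of $\pass{t}{v'}$, and mixed-sign terms prevent isolating an individual positive difference from the sum. The level-set cut $S$ resolves this by forcing every boundary contribution to have the same sign, so that any one edge's contribution is controlled by the total.
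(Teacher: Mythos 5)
Your proof is correct, but it is worth noting that the paper itself supplies no argument here at all: Lemma~\ref{l1} is stated as a special case of Tardos's Lemma~5 and invoked as a black box, so you are proving something the paper only cites. Your derivation checks out. The conservation identity $r_0(u)-r_t(u)=\sum_{w\sim u}(\pass{t}{u}-\pass{t}{w})$ is valid because a vertex passing in a round loses exactly $\deg(u)$ candies and gains one from each passing neighbor, regardless of the threshold rule; summing over $S$ does cancel the internal edges; the left side is at most $\sum_{u\in S}r_0(u)\leq c$ since candy counts never go negative and total $c$ is conserved; and the level set $S=\{u:\pass{t}{u}\geq\pass{t}{v'}+1\}$ does put $v$ inside, $v'$ outside, and make every boundary term nonnegative, so the single term $\pass{t}{v}-\pass{t}{v'}$ is bounded by the whole cut sum and hence by $c$. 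Your diagnosis of why the naive single-vertex balance at $v'$ fails (mixed signs among the other neighbors, which would only yield a bound with a spurious $\deg(v')$ factor) is exactly the right obstacle to identify, and the level-set cut is the standard chip-firing remedy for it. What your approach buys is a self-contained paper: a reader no longer needs to consult \cite{T} to verify the one external ingredient the main theorem depends on, and the cut identity you derive is the natural ``discrete divergence theorem'' underlying much of the chip-firing literature. What the paper's citation buys is brevity and an explicit pointer to the general statement (Tardos's Lemma~5 holds for arbitrary chip-firing games, not just candy-passing), which matters since Lemma~\ref{l2} is phrased for chip-firing games; your argument, as written for the synchronous candy-passing dynamics, transfers to that setting essentially verbatim, but you should say so if the lemma is to be applied there.
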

\noindent Additionally, we need an observation about the condition $c\geq 4|E(G)|-|V(G)|$.
\begin{lemma}\label{l2}For $G$ a graph and $c\geq 4|E(G)|-|V(G)|$, in any chip-firing game on $G$ with $c$ candies there is at least one vertex
  $v_*\in V(G)$ which passes candy every round.
\end{lemma}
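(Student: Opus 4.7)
The main idea is to exploit the monotonicity of the abundant set recorded in the paragraph immediately preceding Theorem \ref{thmP}: writing $A_t$ for the set of vertices that are abundant at the start of round $t$, that observation gives $A_{t+1} \subseteq A_t$ for every $t$. Since every abundant vertex $v$ holds $2\deg(v)\ge \deg(v)$ candies, each vertex of $A_t$ passes in round $t$; my goal is therefore to locate a vertex lying in $A_t$ for every $t$, with one edge case requiring separate treatment.

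If $A_t \ne \emptyset$ for every $t$, then since $V(G)$ is finite and the $A_t$ are nested, the sequence stabilizes and $A_\infty = \bigcap_t A_t$ is non-empty; any $v_* \in A_\infty$ is abundant---and hence passes---in every round.

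The only possibility requiring care is that $A_{t_0} = \emptyset$ at some first round $t_0$. Then every vertex holds at most $2\deg(v) - 1$ candies, so the total satisfies $c \le \sum_v (2\deg(v)-1) = 4|E(G)| - |V(G)|$; combined with the hypothesis this forces equality, and in particular every vertex holds exactly $2\deg(v)-1 \ge \deg(v)$ candies (using $\deg(v)\ge 1$, which is automatic for connected $G$ with $|V(G)|\ge 2$). Hence every vertex passes in round $t_0$, and in any round where every vertex passes each vertex loses and immediately receives $\deg(v)$ candies, so the configuration is fixed from round $t_0$ onward and every vertex passes in every round $t \ge t_0$. Picking any $v_* \in A_{t_0-1}$---non-empty by the minimality of $t_0$, or picking any vertex when $t_0 = 1$---then yields a vertex that is abundant, hence passes, in each round $1,\dots,t_0-1$ by monotonicity, and passes in each round $t\ge t_0$ by the just-established invariance; combining the two ranges gives the required $v_*$.

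The only real difficulty is spotting the boundary case $c = 4|E(G)|-|V(G)|$ in which the abundant set really can be emptied; the essential calculation is that emptiness forces the uniform $(2\deg(v)-1)$-configuration, which is itself a universally-passing fixed point of the dynamics, so the argument continues cleanly past the moment $A_t$ vanishes.
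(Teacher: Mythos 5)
Your proof is correct and takes essentially the same route as the paper: both arguments rest on the monotone shrinking of the abundant set together with the counting bound $\sum_{v}(2\deg(v)-1)=4|E(G)|-|V(G)|\leq c$, which shows a vertex can fail to be abundant only in rounds where it holds at least $2\deg(v)-1\geq\deg(v)$ candies and so passes anyway. Your explicit treatment of the boundary case $c=4|E(G)|-|V(G)|$ (where the abundant set can empty out and the configuration freezes at the all-$(2\deg(v)-1)$ fixed point) is handled implicitly in the paper by restricting attention to rounds in which some vertex holds fewer than $2\deg(v)-1$ candies, but the underlying mechanism is identical.
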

\begin{proof}
It suffices to find a vertex $v_*\in V(G)$ which passes candy
  every round $t$ during which some vertex $v\in V(G)$ holds fewer
  than  $2\deg(v)-1$ candies.  

As observed above, it is not possible for a vertex $v\in V(G)$ which is not abundant at
  the beginning of round $t$ to become abundant after round $t$.
  However,  the condition $$c\geq 4|E(G)|-|V(G)|$$
  guarantees that whenever some $v\in V(G)$ holds fewer than $2\deg(v)-1$ candies there is also at
  least one abundant vertex $v'\in V(G)$.
 The existence of some vertex $v_*\in V(G)$ which is abundant in
  every round when some vertex $v\in V(G)$ has fewer than $2\deg(v)-1$
  candies then follows immediately.
\end{proof}
\begin{remark}Lemma \ref{l2} is, in some sense, dual to Tardos's \cite{T} Lemma 4
which shows that for any terminating chip-firing game on $G$ there is a distinguished
vertex $v_*\in V(G)$ which never fires.\end{remark}
We may now proceed with the proof of our main result:
\begin{proof}[Proof of Theorem \ref{thmP}]
By Lemma \ref{l2}, there is some vertex $v_*\in V(G)$ which passes candy
every round.  Denoting the rounds by $t=1,2,\ldots$, we then have
$\pass{t}{v_*}=t$ for all rounds $t$.  By Lemma \ref{l1}, we then know that
$$|\pass{t}{v_*}-\pass{t}{v}|\leq d\cdot c$$ for all $t$ and $v\in
V(G)$.  Since $\pass{t}{v_*}$ is strictly increasing in $t$, no $v\in
V(G)$ may fail to pass candy for more than $d\cdot c$ rounds.  In the
worst case, all but one vertex pass candy in each round when some vertex
does not pass candy; hence after $|V(G)|\cdot d\cdot c$ rounds all the
vertices of $G$ pass candy every round.
\end{proof}

\end{document}